\documentclass[12pt]{article}
\usepackage{amsmath}
\usepackage{algorithmic}
\usepackage[Algorithm,ruled]{algorithm}
\usepackage{titlesec}
\usepackage{sectsty}
\usepackage[left=2cm,right=2cm,top=2cm,bottom=2cm]{geometry}
\usepackage{amssymb,amscd}
\usepackage{amsthm}
\usepackage{graphicx}
\usepackage[normalem]{ulem}
\usepackage[bookmarksnumbered,  plainpages]{hyperref}
\usepackage{multirow}
\usepackage{inputenc}

\topmargin -0.3cm \evensidemargin -0.2cm \oddsidemargin -0.2cm
\textheight 9in \textwidth 6in
\newtheoremstyle{case}{}{}{}{}{}{:}{ }{}
\theoremstyle{case}

\newcommand{\be}{\begin{equation}}
\newcommand{\ee}{\end{equation}}
\newcommand{\ben}{\begin{eqnarray*}}
\newcommand{\een}{\end{eqnarray*}}
\newtheorem{examp}{\sc Example}
\newtheorem{remk}{\sc Remark}
\newtheorem{corol}{\sc Corollary}
\newtheorem{lemma}{\sc Lemma}
\newtheorem{theorem}{\sc Theorem}
\newtheorem{defn}{\sc Definition}
\newtheorem{prop}{\sc Proposition}
\newcommand{\bet}{\begin{theorem}}
    \newcommand{\eet}{\end{theorem}}
\newcommand{\bel}{\begin{lemma}}
    \newcommand{\eel}{\end{lemma}}
\newcommand{\bed}{\begin{defn}}
    \newcommand{\eed}{\end{defn}}
\newcommand{\brem}{\begin{remk}}
    \newcommand{\erem}{\end{remk}}
\newcommand{\bex}{\begin{examp}}
    \newcommand{\eex}{\end{examp}}
\newcommand{\bcl}{\begin{corol}}
    \newcommand{\ecl}{\end{corol}}
\newcommand{\bep}{\begin{prop}}
    \newcommand{\eep}{\end{prop}}

\newcommand{\NI}{\noindent}
\newcommand{\bea}{\begin{eqnarray}}
\newcommand{\eea}{\end{eqnarray}}

\newcommand{\vsp}{\vskip 1em}

\begin{document}
\title{\large {\bf{\sc Solution of oligopoly market equilibrium problem using modified newton method }}}
\author{ A. Dutta$^{a, 1}$ and  A. K. Das$^{b, 2}$\\
\emph{\small $^{a}$Department of Mathematics, Jadavpur University, Kolkata, 700 032, India}\\	
\emph{\small $^{b}$SQC \& OR Unit, Indian Statistical Institute, Kolkata, 700 108, India}\\
\emph{\small $^{1}$Email: aritradutta001@gmail.com}\\
\emph{\small $^{2}$Email: akdas@isical.ac.in} \\
 }
\date{}
\maketitle
\begin{abstract}
	The paper aims to find the solution of oligopoly market equilibrium problem through system of nonlinear equations. We propose modified newton method to obtain the solution of system of nonlinear equations. We show that our proposed method has higher order of convergence. 
	\vsp
\NI{\bf Keywords:} System of nonlinear equations, modified newton method, oligopolistic market equilibrium problem. \\
\end{abstract}
\section{Introduction}
 An oligopolistic 
market structure in which $n$ firms supply a homogeneous product in a noncooperative fashion. 

Following Murphy et al. \cite{murphy} the oligopolistic market equilibrium problem is as follows:

Let there be $n$ firms, which supply a homogeneous product in a noncooperative fashion. Let $P(\tilde{Q}), \tilde{Q} \geq 0$ denote the inverse demand, where $\tilde{Q}= \sum_{i=1}^{n} Q_i,$ $Q_i \geq 0$ denote the $i$-th firm's supply. Let $c_i(Q_i)$ be the total cost of supplying $Q_i$ units. Now the Nash equilibrium solution is a set of nonnegative output levels ${Q_1}^*,{Q_2}^*,\cdots,{Q_n}^*,$ such that ${Q_i}^*$ is an optimal solution to the following problem $\forall i\in \{1,2\cdots, n\}:$
\begin{eqnarray}\label{obj}
\text{maximize} \ {Q_iP(Q_i+{\tilde{Q}_i}^*)-c_i(Q_i)}
\end{eqnarray}
where ${\tilde{Q}_i}^*= \sum_{j \neq i}{Q_j}^*.$
Murphy et al. show that if $c_i(Q_i)$ is convex and continuously differentiable $\forall i \in\{1,2, \cdots,n\} $ and the inverse demand function $P(\tilde{Q})$ is strictly decreasing and continuously differentiable and the industry revenue curve $\tilde{Q}P(\tilde{Q})$ is concave, then \\
$({Q_1}^*,{Q_2}^*,\cdots,{Q_n}^*)$ is a Nash equilibrium solution if and only if
\begin{eqnarray}
[P(\tilde{Q}^*)+{Q_i}^*P'(\tilde{Q}^*)-{c_i}'({Q_i}^*)]{Q_i}^*=0 \\
{c_i}'({Q_i}^*)-P(\tilde{Q}^*)-{Q_i}^*P'(\tilde{Q}^*) \geq 0 \label{22}\\
{Q_i}^* \geq 0 \ \    \forall i\in \{1,2,\cdots,n\}
\end{eqnarray}

where $\tilde{Q}^*=\sum_{i=1}^{n}{Q_i}^*,$ which is a nonlinear complementarity problem with $f_i(z)={c_i}'({Q_i}^*)-P(\tilde{Q}^*)-{Q_i}^*P'(\tilde{Q}^*),  $ and $z_i= {Q_i}^*.$ \\
Note that here the functions $c_i(Q_i)$ and $-\tilde{Q}P(\tilde{Q})$ are convex. So the 1st order derivative of these two functions are increasing function. Hence the function $f_i(z)={c_i}'({Q_i}^*)-P(\tilde{Q}^*)-{Q_i}^*P'(\tilde{Q}^*)  $ is an increasing function.\\

The nonlinear complementarity problem is identified as an important mathematical programming problem and  has  been  used  as  a  general  framework  for  quadratic  programming, linear complementarity problems and some equilibrium problems. A number of applications of nonlinear complementarity problems are reported in operations research \cite{pang1995complementarity}, multiple objective programming problem \cite{kostreva1993linear}, mathematical economics and engineering.
The concept of complementarity is synonymous with the notion of system equilibrium. 
The nonlinear complementarity problem is well studied in the literature on mathematical
programming and arises in a number of applications in operations research, control theory,
mathematical economics, geometry and engineering. 

The idea of nonlinear complementarity problem is based on the concept of linear complementarity problem. For recent study on this problem and applications see
\cite{das2017finiteness}, \cite{articlee14}, \cite{bookk1}, \cite{articlee7}, \cite{b1}, \cite{b9}, \cite{b10} and references therein. For details of several matrix classes in complementarity theory, see \cite{articlee1}, \cite{articlee2}, \cite{articlee9}, \cite{articlee17}, \cite{article1}, \cite{mohan2001more}, \cite{b2}, \cite{b3} \cite{article07}, \cite{dutta2022column} and references cited therein. The problem of computing the value vector and optimal stationary strategies for structured stochastic games for discounted and undiscounded zero-sum games and quadratic multi-objective programming problem are formulated as linear complementary problems. For details see \cite{articlee18}, \cite{b4}, \cite{b5}, \cite{neogy2005linear} and \cite{neogy2008mixture}. The complementarity problems are considered with respect to principal pivot transforms and pivotal method to its solution point of view. For details see \cite{articlee8}, \cite{articlee10}, \cite{b6}, \cite{b7}, \cite{b8} and \cite{das1}.

A wide class of problems, which arise in various fields of sciences, can be studied via the nonlinear system of equations, $ f(x) = 0 $ using various techniques. Finding  solutions of systems of nonlinear equations has an important role to deal with problems in various fields such as chemical production processes,
engineering design, economic equilibrium, transportation and applied physics. A number of methods are proposed to solve systems of equations. Newton and quasi-newton  methods are  well known iterative methods to solve nonlinear systems of equations. 
 In recent years, researchers are interested  to solve system of nonlinear equations both analytically and numerically. Several iterative methods have been developed using different techniques such as Taylor’s series expansion,
quadrature formulas, homotopy method, interpolation, decomposition and its various modification.
For details, see \cite{sayevand2016systems}, \cite{noor2009some}, \cite{jafari}, \cite{ojika1983deflation}, \cite{fan2003modified} and \cite{yamamura1998interval}.\\
Suppose $f:R\to R,$ a nonlinear function. Then the equation $f(x)=0$ can be solved by newton method with the iterative process $x^{k+1}=x^{k}-f'(x^{k})^{-1}f(x^{k}).$ Consider the nonlinear function $g:R^n \to R^n$ . Then the system of nonlinear equations $g(x)=0$ can be solved by newton method with the iterative process $x^{k+1}=x^{k}-J_g (x^{k})^{-1}g(x^{k}),$ where $J_g$ is the jacobian of the function $g(x)=0.$ Many researchers extended newton method in various way to get the solution with higher order of convergence . For details see \cite{homeier2004modified}, \cite{kou2006modification}, \cite{ehrlich1967modified}.

\section{Preliminaries}
We begin by introducing some basic notations used in this paper. $R^n$  denotes the $n$ dimensional real space, $R^n_+$ and $R^{n}_{++}$ denote the nonnegative and positive orthant of $R^n.$ We consider vectors and matrices with real entries. Any vector $x\in R^{n}$ is a column vector and  $x^{T}$ denotes the row transpose of $x.$ $e$ denotes the vector of all $1.$\\
The nonlinear complementarity problem is to find the vector $z\in R^n,$ such that $f(z),z \geq 0, \ z^Tf(z)=0,$ where $f(z)$ is a nonlinear equation.
\section{Main Results}
Now we show that the nonlinear complementarity problem can be solved by system of nonlinear equations.
   \begin{theorem} 	Let $\phi:R \to R$ be any increasing function such that $\phi(0)=0$. Then $z$ solves the complementarity problem\ref{cp} if and only if 
    	\begin{equation}\label{ma}
    	\phi((f_i(z)-z_i)^2) - \phi(f_i(z) |f_i(z)|) - \phi(z_i|z_i|)=0
    	\end{equation}
       \end{theorem}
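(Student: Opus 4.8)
The plan is to reduce the statement to a single component and then to a purely real-variable fact. Since both the complementarity conditions and the displayed identity are imposed coordinatewise, it suffices to fix an index $i$, write $a = f_i(z)$ and $b = z_i$, and prove that for real numbers $a,b$ one has
\begin{equation}
\phi((a-b)^2) - \phi(a|a|) - \phi(b|b|) = 0
\end{equation}
if and only if $a \geq 0$, $b \geq 0$ and $ab = 0$. I would first record the only feature of $\phi$ that is needed: being increasing with $\phi(0)=0$, it is sign-preserving, i.e. $\phi(t)>0 \iff t>0$ and $\phi(t)<0 \iff t<0$. Here I read ``increasing'' as strictly increasing, which is essential, since a merely nondecreasing $\phi$ such as $\phi\equiv 0$ would make the equation vacuous.

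For the forward direction I would simply substitute. If $b=0$ and $a\geq 0$ then $(a-b)^2 = a^2 = a|a|$ and $b|b|=0$, so the three terms collapse to $\phi(a^2)-\phi(a^2)-\phi(0)=0$; the symmetric sub-case $a=0$, $b\geq 0$ is identical. Thus no additivity property of $\phi$ is invoked: in each complementary configuration one of the two ``signed-square'' terms $a|a|,\,b|b|$ vanishes and the other matches $\phi((a-b)^2)$ exactly.

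The substance is the converse, which I would prove by contraposition through a sign-and-magnitude case analysis, abbreviating $u=(a-b)^2\geq 0$, $v=a|a|$ and $w=b|b|$ and noting that $v,w$ carry the signs of $a,b$. If $a>0$ and $b>0$, then (taking $a\geq b$ without loss of generality) $0\leq a-b<a$ gives $u<a^2=\max(v,w)$, while $\phi(\min(v,w))>0$; hence $\phi(u)<\phi(\max(v,w))<\phi(v)+\phi(w)$ and the equation fails strictly. If $a<0$ (the case $b<0$ being symmetric) I split on $b$: when $b\leq 0$ the right side $\phi(v)+\phi(w)$ is negative while $\phi(u)\geq 0$; when $b>0$ one computes $u=(b+|a|)^2>b^2=w$, so the equation would force $\phi(v)=\phi(u)-\phi(w)>0$, contradicting $v<0$. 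Since every non-complementary configuration is covered, the identity forces complementarity.

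The main obstacle, and the only genuinely delicate point, is the ``both positive'' case: there the two signed squares are positive and the naive expectation would be an additive identity for $\phi$, which fails for a general increasing function. The resolution is the elementary bound $(a-b)^2<\max(a^2,b^2)$ for $a,b>0$ together with positivity of $\phi$ at the smaller term, which turns the putative equality into a strict inequality without ever requiring $\phi$ to be additive; verifying these inequalities and assembling the cases is then routine.
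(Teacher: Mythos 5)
Your proof is correct and follows essentially the same route as the paper's: necessity by direct substitution in each complementary configuration, and sufficiency by contradiction through a sign case analysis using the key inequalities $(a-b)^2 < \max(a^2,b^2)$ for $a,b>0$ and the sign-preservation of $\phi$ (the paper organizes the negative cases as ``show $f(z)\geq 0$, then $z\geq 0$ by symmetry, then complementarity,'' but the underlying estimates are identical to yours). Your explicit remark that ``increasing'' must be read as strictly increasing is a point the paper leaves implicit but does rely on.
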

   \begin{proof}
   	Necessery. For each $i=1,2,\ldots ,n$, either $z_i=0 , f_i(z)\geq 0$  or $f_i(z)=0 , z_i \geq 0$.\\ If $z_i=0,f_i(z)\geq 0 $ then
   	$\phi((f_i(z)-z_i)^2) - \phi(f_i(z) |f_i(z)|) - \phi(z_i|z_i|) $\\
   	$ = \phi((f_i(z))^2) - \phi(f_i(z) |f_i(z)|)=\phi((f_i(z))^2) - \phi((f_i(z))^2)=0$.\\
   	If $f_i(z)=0 , z_i \geq 0$ then  $\phi((f_i(z)-z_i)^2) - \phi(f_i(z) |f_i(z)|) - \phi(z_i|z_i|) $\\ 
   	 	$ = \phi((z_i)^2) - \phi(z_i |z_i|)=\phi((z_i)^2) - \phi((z_i)^2)=0$.\\
   	 	So the solution of $(1.1)$ satisfies $(1.2)$.
   	 	\\
   	 	Sufficient. (a) To show that $f(z) \geq 0$ assume the contrary, that is, $f_i(z) < 0$ for some $i=1,2,\dots,n$. Then
   	 	 \begin{center}
   	 	 $0\leq \phi((f_i(z)-z_i)^2)$
   	 	$= \phi(f_i(z)|f_i(z)|) + \phi(z_i|z_i|)$ $= \phi(-f_i(z)^2) + \phi(z_i|z_i|)$ $ < \phi(z_i|z_i|)$
   	 \end{center}
      This implies that $ \phi (z_i|z_i|) > 0$ $ \implies z_i|z_i| >0 $ $ \implies z_i >0$ and  $\phi((f_i(z)-z_i)^2)< \phi(z_i|z_i|)$ $\implies ((f_i(z)-z_i)^2) < z_i|z_i|$.\\
   	 	But for $z_i >0$,  $(z_i)^2 < (f_i(z)-z_i)^2$   [as $(f_i(z)-z_i)<0, z_i >0 $ then $ (f_i(z)-z_i)<0$, $|f_i(z)-z_i|>z_i$ so $(f_i(z)-z_i)^2 >z_i^2$ ].\\
   	 	 So, it contradicts that $f_i(z) < 0$. So $f_i(z) \geq 0$.\\
   	 	 (b)  To show that $z \geq 0$ interchange the roles of $ z_i$ and $f_i(z)$.\\
   	 	 (c)From (a) and (b) we have that  $z \geq 0$ and  $f(z) \geq 0$. To show $z^Tf(z)=0$ assume the contrary $z_i >0$ and $f_i(z)>0$ for some $ i =1,2,\ldots,n.$\\
   	 	 If $f_i(z)\geq z_i$, then $\phi( (f_i(z)-z_i)^2) < \phi( (f_i(z))^2) + \phi( (z_i)^2)= \phi(f_i(z) |f_i(z)|) + \phi(z_i|z_i|)$. This contradicts that 	$\phi((f_i(z)-z_i)^2) = \phi(f_i(z) |f_i(z)|) + \phi(z_i|z_i|) $\\
   	 	 Similarly, for  $z_i\geq f_i(z),$ just interchange the roles of $z_i$ and $f_i(z)$ and will get a contradiction.
   \end{proof}
   Hence it is shown that the complementarity problem of finding a $z\in R^n$ satisfying $z^Tf(z)=0,\ \ \ f(z) \geq 0, \ \ \ z \geq 0$ where  $f: R^n \rightarrow R^n$, a nonlinear equation, is equivalent to the problem of solving system of n nonlinear equations in n variables.
  	\begin{equation}{\label{ne}}
  \psi_i(z)=\phi((f_i(z)-z_i)^2) - \phi(f_i(z) |f_i(z)|) - \phi(z_i|z_i|)=0 \ \ \ \forall   i \in \{1,2,\cdots n\},
  \end{equation}
  where $\phi$ is an increasing function defined by $\phi:R \to R$ such that $\phi(x)=x^3.$
   Now $\frac{\partial \psi_i}{\partial z_j}$ $=\phi^{'}((f_i(z)-z_i)^2)2(f_i(z)-z_i)(\frac{\partial f_i}{\partial z_j}-\delta_{ij})$  $-\phi^{'}(f_i(z)|f_i(z)|)2f_i(z)sgn(f_i(z))\frac{\partial f_i}{\partial z_j}$  $-\phi^{'}(z_i|z_i|)2z_isgn(z_i) \delta_{ij}$
    where 
    \[   
    sgn(x) = 
    \begin{cases}
    \  1 &\quad\text{if } x  > 0\\
    \ -1 &\quad\text{if } x < 0\\
    \end{cases}
    \]
     \quad

\subsection{Modified Newton method to solve system of nonlinear equations}

Now we introduce the modified Newton's method to solve the  system of nonlinear equations. The algorithm of the modified newton's method is given below.\\
\textbf {Algorithm:}\label{newmodi}\\

{ \textbf{\textit{Modified Newton Method}}\label{allgo}

\NI\textbf{Step 0:} 
Give the initial approximation $z_0$ and a very small number $e, $ such that $0<e.$  Compute the Jacobian of $\psi(z)$ with respect to the variable $z,$ which is denoted by  $\cal{J}$ and $\psi'(z)=\cal{J}.$ Set $k=0.$
\\
 \textbf{Step 1:}  For $k$-th iteration compute the followings:\\
  \begin{equation}\label{eq1}
      \begin{aligned} 
 y^k=z^k-\frac{1}{2}[\psi'(z^k)+diag(t_i\psi_i(z^k)]^{-1}\psi(z^k);\\
    x^k=z^k-\frac{1}{2}[\{\psi'(z^k)\}^2+\{\psi'(y^k)\}^2+diag(\lambda_i\{\psi_i(z^k)\}^2]^{-1} \\
   [\psi'(z^k)+ \psi'(y^k)]\psi(z^k);\\
 w^k=x^k-[\{\psi'(x^k)\}^2+\{\psi'(y^k)\}^2+diag(\mu_i\{\psi_i(x^k)\}^2]^{-1} \\
 [\psi'(x^k)+
 \psi'(y^k)]\psi(x^k);\\
  z^{k+1}=w^k-[\psi'(w^k)+diag(\eta_i\{\psi_i(w^k)\}^2]^{-1}\psi(w^k),
      \end{aligned}
       \end{equation}\\
       where $\psi'(x)$ is the jacobian of $\psi(x)$ and\\
       sgn$(t_i \psi_i (z^k))=$sgn$(\frac{\partial \psi_i}{\partial z_i}(z^k)),$\\
      sgn$(\lambda_i)=$sgn$(\frac{\partial \psi_i}{\partial z_i}(z^k)),$\\
      sgn$(\mu_i)=$sgn$(\frac{\partial \psi_i}{\partial x_i}(x^k)),$\\
      sgn$(\eta_i)=$sgn$(\frac{\partial \psi_i}{\partial w_i}(w^k)).$\\
\NI \textbf{Step 2:}  Compute the norm of $\psi(z)=0$ which is denoted by $n_1.$ Here $n_1$ is defined by $n_1=\|\psi(z)\|=\sqrt{\sum_{i=1}^{n}\{\psi_i (z)\}^2}$.\\
 \textbf{Step 3:}
     If the norm $n_1$ is less than the tolerance $e,$ i.e. $n_1<e,$ then the new iteration i.e. $z^{k+1}$ is the required solution of $\psi(z)=0,$ i.e. $\psi(z^{k+1})=  \begin{bmatrix} \psi_1(z^{k+1})\\\psi_2(z^{k+1})\\ \vdots \\ \psi_n(z^{k+1}) \end{bmatrix}$ $=\begin{bmatrix} 0\\0\\ \vdots \\0 \end{bmatrix}.$ Otherwise set $k=k+1$ and go to step $1.$
     \bigskip
     \begin{theorem}
         Let $\psi : R^n \to R^n$ has a root  $z^* \in D \subseteq R^n$, where $D$ is an open convex set. Assume that  $\psi(z)$ is three times Fre{'}chet differentiable in some neighborhood $N$ of the root $(z^*)$. If for all $z \in N$, $diag(t_i\psi_i(z^k), diag(\lambda_i\{\psi_i(z^k)\}^2,\\ diag(\mu_i\{\psi_i(x^k)\}^2$ and $diag(\eta_i\{\psi_i(w^k)\}^2$  are nonsingular, then the method defined by \ref{eq1} is of seventh-order convergence.
     \end{theorem}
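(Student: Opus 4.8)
The plan is to carry out a local error analysis at the root $z^*$ by Fr\'echet/Taylor expansion, tracking the error $e_k = z^k - z^*$ through the four substeps of \ref{eq1} and then composing the resulting local orders. First I would fix the usual normalised-derivative notation $C_j = \frac{1}{j!}[\psi'(z^*)]^{-1}\psi^{(j)}(z^*)$ and record the two expansions used repeatedly,
\begin{equation}
\begin{aligned}
\psi(z^k) &= \psi'(z^*)\big[e_k + C_2 e_k^2 + C_3 e_k^3 + o(\|e_k\|^3)\big],\\
\psi'(z^k) &= \psi'(z^*)\big[I + 2C_2 e_k + 3C_3 e_k^2 + o(\|e_k\|^2)\big],
\end{aligned}
\end{equation}
which are legitimate since $\psi(z^*)=0$ and $\psi$ is three times Fr\'echet differentiable on $N$; the same expansions hold at $y^k,x^k,w^k$ with $e_k$ replaced by the corresponding error vector.

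Next I would dispose of the diagonal corrections. Because $z^*$ is a root, each component $\psi_i$ vanishes there, so $\psi_i(z^k)=O(\|e_k\|)$ and hence $\mathrm{diag}(t_i\psi_i(z^k))=O(\|e_k\|)$, while $\mathrm{diag}(\lambda_i\{\psi_i(z^k)\}^2)$, $\mathrm{diag}(\mu_i\{\psi_i(x^k)\}^2)$ and $\mathrm{diag}(\eta_i\{\psi_i(w^k)\}^2)$ are each $O(\|\cdot\|^2)$ in the relevant error. The nonsingularity hypothesis guarantees that every bracketed matrix in \ref{eq1} is invertible throughout $N$, and the sign conditions on $t_i,\lambda_i,\mu_i,\eta_i$ keep each added diagonal sign-aligned with the corresponding Jacobian diagonal, so the perturbation never spoils invertibility. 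I would then expand each inverse by a Neumann series, e.g. $[\psi'(z^k)+\mathrm{diag}(t_i\psi_i(z^k))]^{-1} = [\psi'(z^k)]^{-1} - [\psi'(z^k)]^{-1}\,\mathrm{diag}(t_i\psi_i(z^k))\,[\psi'(z^k)]^{-1}+\cdots$, which exhibits the diagonal terms as subleading corrections that do not alter the leading Jacobian behaviour.

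With these preliminaries I would propagate the error substep by substep, writing $e_k^y = y^k-z^*$, $e_k^x = x^k-z^*$, $e_k^w = w^k-z^*$. The factor $\tfrac12$ in the first two lines makes $y^k$ and $x^k$ intermediate predictor points whose role is to furnish Jacobian data away from $z^k$, so the structural facts I would extract are: (i) the symmetric combination $(\{\psi'(\cdot)\}^2+\{\psi'(\cdot)\}^2+\mathrm{diag})^{-1}(\psi'(\cdot)+\psi'(\cdot))$ reproduces $[\psi'(z^*)]^{-1}$ at leading order, since at $z^*$ both Jacobians equal $\psi'(z^*)$ and the block collapses to $(2\{\psi'(z^*)\}^2)^{-1}(2\psi'(z^*))=[\psi'(z^*)]^{-1}$, together with its first nonvanishing correction; and (ii) the way the auxiliary Jacobians $\psi'(y^k),\psi'(x^k)$ arrange the corrector steps so that successive error coefficients cancel. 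Composing the local orders found for $e_k^y,e_k^x,e_k^w$ with the final modified-Newton update $z^{k+1}=w^k-[\psi'(w^k)+\mathrm{diag}(\eta_i\{\psi_i(w^k)\}^2)]^{-1}\psi(w^k)$ should then yield $\|e_{k+1}\| = O(\|e_k\|^7)$.

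The main obstacle, where I expect the real effort to lie, is the noncommutative bookkeeping in (i): the products $(\{\psi'(z^k)\}^2+\{\psi'(y^k)\}^2+\cdots)^{-1}(\psi'(z^k)+\psi'(y^k))$ do not simplify the way scalar harmonic-type averages do, so the tensor coefficients $C_2,C_3$ must be carried in the correct multiplicative order through every Neumann expansion and the cross terms collected carefully to confirm that the intended coefficients actually vanish. A second delicate point is reconciling the smoothness hypothesis with the claimed order: since only three Fr\'echet derivatives are assumed, I would either retain all higher contributions as controlled remainders $o(\|e_k\|^3)$ and argue that the leading surviving term is of order seven, or strengthen the differentiability assumption enough to justify the full seventh-order expansion.
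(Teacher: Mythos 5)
Your overall strategy---Taylor expansion about $z^*$, treating the diagonal matrices as $O(\|e_k\|)$ resp.\ $O(\|\cdot\|^2)$ perturbations, Neumann expansion of the perturbed inverses, and propagation of the error through the four substeps---is exactly the strategy of the paper's own proof. But your write-up stops short of the only part that carries content: you never actually compute the orders of $e_k^y$, $e_k^x$, $e_k^w$, and your decisive claims (i) and (ii), that the error coefficients cancel so that composition yields order seven, are asserted rather than proved. ``Composing the local orders \dots should then yield $O(\|e_k\|^7)$'' is the theorem itself, not a step toward it.

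Moreover, this is not a gap that more careful noncommutative bookkeeping would close, because the cancellation you are counting on does not occur for the scheme as written. Your own structural fact (i)---that $(\{\psi'(z^k)\}^2+\{\psi'(y^k)\}^2+\mathrm{diag})^{-1}(\psi'(z^k)+\psi'(y^k))$ reproduces $[\psi'(z^*)]^{-1}$ at leading order---combined with the factor $\tfrac12$ in the second substep forces $e_k^x=\tfrac12 e_k+O(\|e_k\|^2)$: like $y^k$, the point $x^k$ is only a \emph{half step} toward the root, not a high-order approximation. Carrying your expansion one order further, with $C_2=\tfrac12[\psi'(z^*)]^{-1}\psi''(z^*)$, gives $e_k^x=\tfrac12 e_k+\tfrac14 C_2e_k^2+O(\|e_k\|^3)$, then $e_k^w=\tfrac14 C_2e_k^2+O(\|e_k\|^3)$, and since the last substep is a Newton step with an $O(\|e_k^w\|^2)$ diagonal perturbation, $e_{k+1}=C_2(e_k^w)^2+O(\|e_k^w\|^3)=O(\|e_k\|^4)$. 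Generically (i.e.\ when $\psi''(z^*)\neq 0$) this is sharp: for the scalar example $\psi(z)=z+z^2$ one finds $e_{k+1}=\tfrac{1}{16}e_k^4+O(e_k^5)$, fourth order, not seventh. So a faithful execution of your plan refutes the claimed order rather than establishing it. For comparison, the paper's proof reaches order seven only by asserting the intermediate estimates $x^k-z^*=o(\|e_k\|^3)$ and $w^k-z^*=o(\|e_k\|^5)$ via identities that fail already at leading order (it treats $\{\psi'(z^k)\}^2e_k+\{\psi'(y^k)\}^2e_k-\tfrac12[\psi'(z^k)+\psi'(y^k)]\psi(z^k)$, which is $\approx \psi'(z^*)^2e_k$, as if it were $o(\|e_k\|^3)$). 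Your separate worry---that only three Fr\'echet derivatives are assumed while an order-seven expansion is needed---is also well founded, but it is secondary to this failure of the cancellation itself.
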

     \begin{proof}
         Let $e^k=z^k-z^*.$ Now using Taylor series expansion we have,\\
         $\psi(z^*)=\psi(z^k)+\psi'(z^k)(z^*-z^k)+\frac{1}{2}\psi''(z^k)(z^*-z^k)^2+o(\|e^k\|^3)$\\ $\implies \psi(z^k)=\psi'(z^k)e^k-\frac{1}{2}\psi''(z^k)(e^k)^2+o(\|e^k\|^3).$\\
         Let $d^k=y^k-z^k=-\frac{1}{2}[\psi'(z^k)+diag(t_i\psi_i(z^k))]^{-1}\psi(z^k).$ \\ Now using Taylor series expansion we have,\\$\psi(y^k)=\psi(z^k)+\psi'(z^k)d^k+\frac{1}{2}\psi''(z^k)(d^k)^2+o(\|d^k\|^3).$\\ Now $d^k=y^k-z^k=-\frac{1}{2}[\psi'(z^k)+diag(t_i\psi_i(z^k))]^{-1}\psi(z^k)\\=-\frac{1}{2}[\psi'(z^k)+diag(t_i\psi_i(z^k))]^{-1}\psi'(z^k)e^k+o(\|e^k\|^2)\\$
         $=-\frac{1}{2}[\psi'(z^k)+diag(t_i\psi_i(z^k))]^{-1}[\psi'(z^k)+diag(t_i\psi_i(z^k))]e^k+\\ \frac{1}{2}[\psi'(z^k)+diag(t_i\psi_i(z^k))]^{-1}[diag(t_i\psi_i(z^k))]e^k+o(\|e^k\|^2)$\\ $=-\frac{1}{2}e^k+o(\|e^k\|^2),$ as $diag(t_i\psi_i(z^k))]e^k=o(\|e^k\|^2).$\\
         Hence $\psi(y^k)=\psi(z^k)-\frac{1}{2}\psi'(z^k)e^k+o(\|e^k\|^2)=\psi'(z^k)e^k-\frac{1}{2}\psi'(z^k)e^k+o(\|e^k\|^2)=\frac{1}{2}\psi'(z^k)e^k+o(\|e^k\|^2)$ and $\psi'(y^k)=\frac{1}{2}\psi'(z^k)+o(\|e^k\|).$\\
         Let $a^k=x^k-z^*. $ Hence  $x^k=z^k-\frac{1}{2}[\{\psi'(z^k)\}^2+\{\psi'(y^k)\}^2+diag(\lambda_i\{\psi_i(z^k)\}^2)]^{-1}[\psi'(z^k)+ \psi'(y^k)]\psi(z^k)$ implies that\\ $a^k=e^k-\frac{1}{2}[\{\psi'(z^k)\}^2+\{\psi'(y^k)\}^2+diag(\lambda_i\{\psi_i(z^k)\}^2)]^{-1}[\psi'(z^k)+ \psi'(y^k)]\psi(z^k)$\\
         $\implies a^k= [\{\psi'(z^k)\}^2+\{\psi'(y^k)\}^2+diag(\lambda_i\{\psi_i(z^k)\}^2)]^{-1}\{[\{\psi'(z^k)\}^2+\{\psi'(y^k)\}^2+diag(\lambda_i\{\psi_i(z^k)\}^2)]e^k-\frac{1}{2}[\psi'(z^k)+ \psi'(y^k)]\psi(z^k)\}\\$ $=[\{\psi'(z^k)\}^2+\{\psi'(y^k)\}^2+diag(\lambda_i\{\psi_i(z^k)\}^2)]^{-1}\{[diag(\lambda_i\{\psi_i(z^k)\}^2)]e^k+\psi'(z^k)\}^2 e^k+\psi'(y^k)\}^2 e^k-\frac{1}{2}[\psi'(z^k)+ \psi'(y^k)]\psi(z^k)\}=o(\|e^k\|^3),$ as $diag(\lambda_i\{\psi_i(z^k)\}^2) e^k=o(\|e^k\|^3).$\\
         Again using Taylor series expansion we have,\\$\psi(x^k)=\psi(z^k)+\psi'(z^k)c^k+\frac{1}{2}\psi''(z^k)(c^k)^2+o(\|c^k\|^3),$where\\
         $c^k=x^k-z^k=-\frac{1}{2}[\{\psi'(z^k)\}^2+\{\psi'(y^k)\}^2+diag(\lambda_i\{\psi_i(z^k)\}^2)]^{-1}[\psi'(z^k)+ \psi'(y^k)]\psi(z^k)$ $=-\frac{1}{2}[\{\psi'(z^k)\}^2+\{\psi'(y^k)\}^2+diag(\lambda_i\{\psi_i(z^k)\}^2)]^{-1}[\psi'(z^k)+ \psi'(y^k)][\psi'(z^k)e^k]+o(\|e^k\|^2)$ $=-\frac{1}{2}[\{\psi'(z^k)\}^2+\{\psi'(y^k)\}^2+diag(\lambda_i\{\psi_i(z^k)\}^2)]^{-1}[\{\psi'(z^k)\}^2 e^k+ \psi'(y^k)\psi'(z^k)e^k]+o(\|e^k\|^2)$ $=-\frac{1}{2}[\{\psi'(z^k)\}^2+\{\psi'(y^k)\}^2+diag(\lambda_i\{\psi_i(z^k)\}^2)]^{-1}[\{\psi'(z^k)\}^2+\{\psi'(y^k)\}^2+$\\$diag(\lambda_i\{\psi_i(z^k)\}^2)] e^k+\frac{1}{2}[\{\psi'(z^k)\}^2+\{\psi'(y^k)\}^2+$\\$diag(\lambda_i\{\psi_i(z^k)\}^2)]^{-1} [\{\psi'(y^k)\}^2+diag(\lambda_i\{\psi_i(z^k)\}^2)-\psi'(y^k)\psi'(z^k)]e^k+o(\|e^k\|^2)=-\frac{1}{2}e^k+o(\|e^k\|^2).$\\
          Hence $\psi(x^k)=\psi(z^k)-\frac{1}{2}\psi'(z^k)e^k+o(\|e^k\|^2)=\psi'(z^k)e^k-\frac{1}{2}\psi'(z^k)e^k+o(\|e^k\|^2)=\frac{1}{2}\psi'(z^k)e^k+o(\|e^k\|^2)$ and $\psi'(x^k)=\frac{1}{2}\psi'(z^k)+o(\|e^k\|).$\\
        
         Let $b^k=w^k-z^*=x^k-z^*-\frac{1}{2}[\{\psi'(x^k)\}^2+\{\psi'(y^k)\}^2+diag(\mu_i\{\psi_i(x^k)\}^2)]^{-1} [\psi'(x^k)+ \psi'(y^k)]\psi(x^k)=a^k-\frac{1}{2}[\{\psi'(x^k)\}^2+\{\psi'(y^k)\}^2+diag(\mu_i\{\psi_i(x^k)\}^2)]^{-1} [\psi'(x^k)+ \psi'(y^k)]\psi(x^k)$\\ $\implies b^k=[\{\psi'(x^k)\}^2+\{\psi'(y^k)\}^2+diag(\mu_i\{\psi_i(x^k)\}^2)]^{-1}\{[\{\psi'(x^k)\}^2+\{\psi'(y^k)\}^2+diag(\mu_i\{\psi_i(x^k)\}^2)]a^k-[\psi'(x^k)+ \psi'(y^k)]\psi(x^k)\}=o(\|e^k\|^5),$ as $diag(\mu_i\{\psi_i(x^k)\}^2) a^k=o(\|e^k\|^5).$ Now using Taylor series expansion we have,\\$\psi(w^k)=\psi(z^k)+\psi'(z^k)m^k+\frac{1}{2}\psi''(z^k)(m^k)^2+o(\|m^k\|^3),$ where
         $m^k=w^k-z^k.$ \\ Let $n^k=w^k-x^k=-[\{\psi'(x^k)\}^2+\{\psi'(y^k)\}^2+diag(\mu_i\{\psi_i(x^k)\}^2)]^{-1}[\psi'(x^k)+ \psi'(y^k)]\psi(x^k)=-[\{\psi'(x^k)\}^2+\{\psi'(y^k)\}^2+diag(\mu_i\{\psi_i(x^k)\}^2)]^{-1}[\frac{1}{2}\psi'(z^k)+ \frac{1}{2}\psi'(z^k)][\frac{1}{2}\psi'(z^k)e^k]+o(\|e^k\|^2)$ $=-\frac{1}{2}[\frac{1}{4}\{\psi'(z^k)\}^2+\frac{1}{4}\{\psi'(z^k)\}^2+diag(\mu_i\{\psi_i(x^k)\}^2)]^{-1}[\{\psi'(z^k)\}^2 e^k]+o(\|e^k\|^2)$ $=-\frac{1}{2}[\{\psi'(z^k)\}^2+diag(\mu_i\{\psi_i(x^k)\}^2)]^{-1}[\{\psi'(z^k)\}^2 e^k]+o(\|e^k\|^2)=-\frac{1}{2}[\{\psi'(z^k)\}^2+diag(\mu_i\{\psi_i(x^k)\}^2)]^{-1}[\{\psi'(z^k)\}^2+diag(\mu_i\{\psi_i(x^k)\}^2)]e^k+\frac{1}{2}[\{\psi'(z^k)\}^2+\\diag(\mu_i\{\psi_i(x^k)\}^2)]^{-1}[diag(\mu_i\{\psi_i(x^k)\}^2) e^k]+o(\|e^k\|^2)=-\frac{1}{2}e^k+o(\|e^k\|^2).$ Hence $m^k=w^k-z^k=w^k-x^k+x^k-z^k=n^k+c^k=-e^k+o(\|e^k\|^2).$ Therefore $\psi(w^k)=\psi(z^k)-\psi'(z^k)e^k+o(\|e^k\|^2).$ \\
         Now $z^{k+1}-z^*=w^k-z^*-[\psi'(w^k)+diag(\eta_i\{\psi_i(w^k)\}^2]^{-1}\psi(w^k)$\\ $\implies  e^{k+1}=b^k-[\psi'(w^k)+diag(\eta_i\{\psi_i(w^k)\}^2]^{-1}\psi(w^k)=[\psi'(w^k)+diag(\eta_i\{\psi_i(w^k)\}^2]^{-1}\{[\psi'(w^k)+diag(\eta_i\{\psi_i(w^k)\}^2]b^k-\psi(w^k)\}=[\psi'(w^k)+diag(\eta_i\{\psi_i(w^k)\}^2]^{-1}\{\psi'(w^k)b^k+diag(\eta_i\{\psi_i(w^k)\}^2 b^k-\psi(w^k)\}=o(\|e^k\|^7),$ as $diag(\eta_i\{\psi_i(w^k)\}^2 b^k=o(\|e^k\|^7)$. Therefore the introduced modified Newton method has seventh order of convergency.
         
     \end{proof}

\section{Numerical Example}\label{oliex}
Now consider an oligopoly with five firms, each with a total cost function of the form:
\begin{equation}\label{olix}
c_i(Q_i)=n_iQ_i+\frac{\beta_i}{\beta_i+1}{L_i}^{\frac{1}{\beta_i}}{Q_i}^{\frac{\beta_i+1}{\beta_i}}
\end{equation}
The demand curve is given by:
\begin{equation}
\tilde{Q}=5000P^{-1.1}, \ \   P(\tilde{Q})=5000^{1/1.1}\tilde{Q}^{-1/1.1}.
\end{equation}

The parameters of the equation \ref{olix} for the five firms are given below: 
\begin{table}[ht]
	\caption{Value of parameters for five firms} 
	\centering
	\begin{tabular}{c c c c} 
		\hline\hline
		firm $i$ & $n_i$ & $L_i$ & $\beta_i$ \\ [0.5ex] 
		\hline
		1 & 10 & 5 & 1.2 \\ 
		2 & 8 & 5 & 1.1 \\
		3 & 6 & 5 & 1 \\
		4 & 4 & 5 & 0.8 \\
		5 & 2 & 5 & 0.6 \\ [1ex] 
		\hline
	\end{tabular}
\end{table}\\

\newpage
 This oligopoly market equilibrium problem is a nonlinear complementarity problem $z\geq 0, f(z)\geq 0, z^Tf(z)=0$. The nonlinear complementarity problem is solved by solving system of nonlinear equations 
 \begin{equation}\label{ne}
     \phi((f_i(z)-z_i)^2) - \phi(f_i(z) |f_i(z)|) - \phi(z_i|z_i|)=0
 \end{equation}	 where $\phi(z)=z^3.$\\
 Now we solve the system of nonlinear equations \ref{ne} by the above proposed algorithm.

 	 Now to solve this oligopoly problem by modified newton's method \ref{newmodi}, first take the initial point $z_0=$ $\left[\begin{array}{c} 
 	40\\
 	50\\
 	60\\
 	55\\
 	45\\
 	\end{array}\right].$ Set $e=10^{-7}, n_1=10^{-9}.$ After $17$ iterations we obtain  $z=$ $\left[\begin{array}{c} 
 	15.4293\\
 	12.4986\\
 	9.6635\\
 	7.1651\\
 	5.1326\\
 	\end{array}\right],$
 	
\section{Conclusion}
In this study we consider oligopolistic market equilibrium problem and give solution approach through system of nonlinear equation. An oligopolistic market equilibrium problem can be reduced to a nonlinear complementarity problem. We show the equivalency between the nonlinear complementarity problem and the system of nonlinear equations.  We introduce modified Newton method with higher order of convergence to solve the system of nonlinear equations to obtain the solution of nonlinear complementarity problem. Although the Jacobian of the system of nonlinear equations be singular, this method can be processed with suitable initial point. We show that the rate of convergency of this method is $7$.  Finally a real life oligopolistic market equilibrium problem is considered to demonstrate the effectiveness of our results and algorithms.
\section{Acknowledgment}
The author A. Dutta is thankful to the Department of Science and Technology, Govt. of India, INSPIRE Fellowship Scheme for financial support.

\bibliographystyle{plain}
\bibliography{name}
\end{document}